\documentclass[11pt,reqno,a4paper]{amsart}
\usepackage{color,enumerate,cancel,hyperref,mathtools,tikz-cd,amssymb,amsfonts}
\newtheorem{theorem}{Theorem}
\newtheorem{lemma}[theorem]{Lemma}
\newtheorem{proposition}[theorem]{Proposition}
\newtheorem{corollary}[theorem]{Corollary}

\theoremstyle{definition}

\newtheorem{example}[theorem]{Example}
\newtheorem*{problem}{Problem}

\newcommand{\Lip}{\operatorname{Lip}_0}

\newcommand{\R}{\mathbb{R}}
\newcommand{\N}{\mathbb{N}}

\newcommand{\F}{\mathcal{F}}

\theoremstyle{remark}
\newtheorem{remark}[theorem]{Remark}
\numberwithin{equation}{section}

\begin{document}

\title[A lifting theorem for operators between Lipschitz spaces]{A lifting theorem for operators between Lipschitz spaces}
\author[L. Candido]{Leandro Candido}
\address{Universidade Federal de S\~ao Paulo - UNIFESP. Instituto de Ci\^encia e Tecnologia. Departamento de Matem\'atica. S\~ao Jos\'e dos Campos - SP, Brasil}
\email{leandro.candido@unifesp.br}

\thanks{The author is supported by Funda\c c\~ao de Amparo \`a Pesquisa do Estado de S\~ao Paulo - FAPESP No. 2023/12916-1 }

\subjclass[2020]{46B03 (primary), 46E15, 47B38 (secondary)}

\keywords{Lipschitz functions, Lipschitz-free spaces, de Leeuw embedding, spaces of continuous functions, lifting of operators}
 
\begin{abstract}
We prove that every bounded linear operator between Lipschitz spaces  admits a lifting along the de Leeuw embedding. More precisely, given pointed metric spaces $M$ and $N$ and $\epsilon>0$, every bounded linear operator $S:\mathrm{Lip}_0(M)\to \mathrm{Lip}_0(N)$ admits a lifting $\mathfrak{S}:C(\beta \widetilde{M})\to C(\beta \widetilde{N})$ such that $\|\mathfrak{S}\|\leq \|S\|+\epsilon$ and $\mathfrak{S}(\varPhi_M(f))=\varPhi_N(S(f))$ for every $f\in \mathrm{Lip}_0(M)$. Moreover, compact operators admit compact liftings. 
\end{abstract}

\maketitle

\section{Introduction}
In this work we study spaces of real-valued Lipschitz functions defined on a metric space $M$ that vanish at a fixed distinguished point $0$, denoted by $\Lip(M)$. These spaces, together with their canonical preduals known as Lipschitz-free spaces $\F(M)$, have attracted considerable attention in nonlinear functional analysis; see, for instance, \cite{1111}, \cite{2222}, \cite{3333}, \cite{4444}, \cite{5555}, \cite{CDW}.

A useful approach in the study of $\Lip(M)$ is to consider its canonical isometric embedding into a space of continuous functions. More precisely, let $\widetilde{M}=\{(x,y)\in M\times M:x\neq y\}$ be equipped with a product metric, and let $\beta \widetilde{M}$ denote its Stone-\v{C}ech compactification. The de Leeuw map $\varPhi_M:\Lip(M)\to C(\beta \widetilde{M})$ (see \cite[\S 2.4]{Weaver}) is the linear isometric embedding defined by extending continuously to $\beta \widetilde{M}$ the function
\[
(x,y)\mapsto \frac{f(x)-f(y)}{d(x,y)}.
\]

This embedding allows one to study $\Lip(M)$ within the framework of $C(K)$ spaces, making available techniques from the theory of spaces of continuous functions. However, the topology of $\beta\widetilde M$ is typically quite intricate.

A natural question is whether operators acting on Lipschitz spaces can also be represented at the level of the corresponding spaces of continuous functions.

The purpose of this paper is to show that bounded linear operators between Lipschitz spaces are compatible with the de Leeuw embedding up to an arbitrarily small perturbation in the norm. More precisely, we prove that every bounded linear operator $S:\Lip(M)\to \Lip(N)$ can be lifted to an operator $\mathfrak{S}:C(\beta \widetilde{M})\to C(\beta \widetilde{N})$ such that the following diagram commutes:
    \begin{center}
        \begin{tikzcd}
            & \Lip(M) \arrow[hookrightarrow]{d}{\varPhi_M}\arrow{r}{S} & \Lip(N) \arrow[hookrightarrow]{d}{\varPhi_N} \\
            & C(\beta \widetilde{M}) \arrow[dashed]{r}{\mathfrak{S}}                &C(\beta \widetilde{N})
        \end{tikzcd}
    \end{center}

More specifically, our main result is as follows:

\begin{theorem}\label{Main1}
Let $M$ and $N$ be pointed metric spaces and let $\epsilon>0$ be arbitrary. Every bounded linear operator $S:\Lip(M)\to\Lip(N)$ can be lifted to an operator $\mathfrak{S}:C(\beta \widetilde{M})\to C(\beta \widetilde{N})$ with $\|\mathfrak{S}\|\leq \|S\|+\epsilon$ and such that 
\[\mathfrak{S}(\varPhi_M(f))=\varPhi_N(S(f))\]
for every $f\in \Lip(M)$.
If in particular $S$ is a compact operator, then the lifting $\mathfrak{S}$ can be chosen to be a compact operator.
\end{theorem}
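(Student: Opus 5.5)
The plan is to build $\mathfrak{S}$ pointwise on the dense subset $\widetilde N\subseteq\beta\widetilde N$, using a norm-continuous selection of Hahn--Banach extensions, and then extend by the universal property of $\beta\widetilde N$. Since $C(\beta\widetilde N)$ is isometrically $C_b(\widetilde N)$, it suffices to produce, for each $g\in C(\beta\widetilde M)$, a bounded continuous function on $\widetilde N$, depending linearly on $g$.

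\textbf{Step 1: norm-continuous functionals on $\widetilde N$.} For $p=(x,y)\in\widetilde N$ put $m_p=(\delta_x-\delta_y)/d(x,y)\in\F(N)\subseteq \Lip(N)^*$. The map $p\mapsto m_p$ is norm-continuous on $\widetilde N$ and $\|m_p\|\le 1$. Set $\psi(p)=S^*(m_p)\in\Lip(M)^*$. Then $\psi$ is norm-continuous, $\|\psi(p)\|\le\|S\|$, and $\psi(p)(f)=\varPhi_N(Sf)(p)$.

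\textbf{Step 2: continuous lifting of functionals.} The adjoint $R=\varPhi_M^*:C(\beta\widetilde M)^*\to\Lip(M)^*$ is a restriction map. Because $\varPhi_M$ is an isometric embedding, Hahn--Banach shows that $R$ is a quotient map: it is onto, and every $\varphi$ has preimages of norm $\|\varphi\|$. Fix $\delta>0$ with $(1+\delta)\|S\|\le\|S\|+\epsilon$. By the Bartle--Graves theorem (via Michael's selection theorem, in its positively homogeneous form) there is a norm-to-norm continuous map $\sigma:\Lip(M)^*\to C(\beta\widetilde M)^*$ such that $R\circ\sigma=\mathrm{id}$ and $\|\sigma(\varphi)\|\le(1+\delta)\|\varphi\|$.

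Let $\mu_p=\sigma(\psi(p))$, a Radon measure on $\beta\widetilde M$, and define
\[
(\mathfrak{S}g)(p)=\langle \mu_p,g\rangle,\qquad p\in\widetilde N .
\]
The properties of $\mathfrak{S}$ then follow directly:
\begin{itemize}
\item $p\mapsto \mathfrak{S}g(p)$ is continuous, since $\mu$ is norm-continuous.
\item $|\mathfrak{S}g(p)|\le(1+\delta)\|S\|\,\|g\|$.
\item $\mathfrak{S}$ is linear in $g$, since $\mu_p$ does not depend on $g$. The selection $\sigma$ itself need not be linear.
\item $\mathfrak{S}g$ extends uniquely to $\beta\widetilde N$ with the same norm.
\end{itemize}
The lifting identity is $\langle\mu_p,\varPhi_M f\rangle=R(\mu_p)(f)=\psi(p)(f)=\varPhi_N(Sf)(p)$ on $\widetilde N$. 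It then holds on all of $\beta\widetilde N$ by density.

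\textbf{Step 3: compactness.} If $S$ is compact, then so is $S^*$. Hence $\psi(\widetilde N)$ is relatively norm-compact in $\Lip(M)^*$, with compact closure $K$. Then $L=\sigma(K)$ is norm-compact in $C(\beta\widetilde M)^*$, and $\mu_p\in L$ for all $p$. For $g$ in the unit ball $B$, the functions $F_g(\nu)=\langle\nu,g\rangle$ on $L$ are uniformly bounded and $1$-Lipschitz. By Arzel\`a--Ascoli, $\{F_g:g\in B\}$ is relatively compact in $C(L)$. Since $\mathfrak{S}g=F_g\circ\mu$ and composition with the continuous map $\mu:\widetilde N\to L$ is a bounded operator $C(L)\to C_b(\widetilde N)=C(\beta\widetilde N)$, the set $\mathfrak{S}(B)$ is relatively compact.

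The main obstacle is Step 2. A pointwise Hahn--Banach extension of the functionals $\varphi_\xi(f)=\varPhi_N(Sf)(\xi)$ for $\xi\in\beta\widetilde N$ gives no continuity in $\xi$. This is exactly why the construction works only on $\widetilde N$, where $\psi$ is norm-continuous and a Bartle--Graves selection applies. One must check that the selection can be taken with norm bound $(1+\delta)\|\varphi\|$ rather than just local boundedness. The standard approach is to apply Michael's theorem on the unit sphere and extend positively homogeneously; the factor $1+\delta$ is what produces the $+\epsilon$ in the norm estimate.
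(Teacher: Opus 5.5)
Your proposal is correct and is essentially the paper's argument. Both build a norm-continuous choice of preimages under the quotient map $\varPhi_M^*$ with norm just above $\|S\|$, using Michael's selection theorem; the lower semicontinuity comes from the same convex-combination and open-mapping argument. That choice is then composed with the norm-continuous map $p\mapsto S^*(\overline m_{pq})$.

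The differences are only in packaging. You use one global Bartle--Graves section $\sigma$, which also gives the compact case directly; the paper's compact case is exactly a selection on $K$ composed with $h$. You also extend through $C_b(\widetilde N)=C(\beta\widetilde N)$ and use Arzel\`a--Ascoli, where the paper extends the measure-valued map weak$^*$ and invokes the Dunford--Schwartz lemma.
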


Theorem \ref{Main1} allows bounded linear operators on Lipschitz spaces to be represented, up to an arbitrarily small increase of the norm, by operators acting on suitable spaces of continuous functions. Furthermore, compact operators admit compact liftings.

The paper is organized as follows. Section \ref{SecBasic} contains the necessary preliminaries. In Section \ref{SecLifting} we prove Theorem \ref{Main1}. Finally, in Section \ref{SecIso} we discuss the existence of liftings of isomorphisms.

%------------------------------------------------------------------------------------------------
%------------------------------------------------------------------------------------------------
%------------------------------------------------------------------------------------------------

\section{Basic Terminology}
\label{SecBasic}

A metric space $(M,d)$ is said to be pointed if it is endowed with a fixed distinguished point $0$, called the origin. For a pointed metric space $M$, 
$\Lip(M)$ denotes the Banach space of all Lipschitz functions $f:M\to \R$ such that $f(0)=0$, equipped with the norm
\[
\|f\|_{\Lip} = \sup_{x\neq y \in M} \frac{|f(x)-f(y)|}{d(x,y)}.
\]

The Banach space $\Lip(M)$ is a dual space whose canonical predual, denoted by $\F(M)$ and called the Lipschitz-free space (or Arens--Eells space), is defined as the closed linear span of all normalized elementary molecules in $\Lip(M)^*$. For every $x, y \in M$, the normalized elementary molecule associated to $(x, y)$ is the functional $\overline{m}_{xy}: \Lip(M) \to \R$ given by
\[
\langle \overline{m}_{xy},f\rangle=\frac{f(x)-f(y)}{d(x,y)}.
\]
For a detailed exposition on this topic we refer the reader to \cite{Weaver}.
 
If $K$ is a Hausdorff compact space, $C(K)$ denotes the Banach space of all real-valued continuous functions on $K$, endowed with the supremum norm. The dual space $C(K)^*$ is identified, via the Riesz representation theorem, with the space $\mathcal{M}(K)$ of all signed Radon measures on $K$ of bounded variation, equipped with the variation norm. For every 
$t\in K$, $\delta_t\in \mathcal{M}(K)$ denotes the Dirac measure concentrated at $t$. For further details concerning Banach spaces of the form $C(K)$ see \cite{Se}.

For topological spaces we adopt standard terminology. If $U$ is a subset of a topological space, then $\mathring{U}$ denotes the interior of $U$. As usual, for a Banach space $X$, the symbol $B_X$ stands for its closed unit ball.

For Banach spaces $X$ and $Y$, a bounded linear surjection $S:X\to Y$ is called a quotient map if the induced operator 
$\widehat{S}:X/\ker(S)\to Y$, $\widehat{S}(x+\ker S)=S(x)$, is a linear isometric isomorphism.

%--------------------------------------------------------------------------------------------------------------------------
%--------------------------------------------------------------------------------------------------------------------------
%--------------------------------------------------------------------------------------------------------------------------
%--------------------------------------------------------------------------------------------------------------------------
%--------------------------------------------------------------------------------------------------------------------------
%--------------------------------------------------------------------------------------------------------------------------

\section{The Lifting Theorem}
\label{SecLifting}

We begin this section with two basic results that will be employed in the proof of Theorem \ref{Main1}.

\begin{proposition}\label{NormEstimation}
If $S:X\to Y$ is a bounded linear operator and $T:Z\to Y$ is a quotient map, then  \[\|S\|=\inf\{\lambda>0:S[B_X]\subset T[\lambda \mathring{B}_Z]\}.\]
\end{proposition}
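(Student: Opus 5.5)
Write $\lambda_0$ for the infimum on the right-hand side. The plan is to prove the two inequalities $\|S\|\le\lambda_0$ and $\lambda_0\le\|S\|$ separately. The first follows from the fact that a quotient map is norm-one. The second follows from the fact that a quotient map sends the open unit ball of $Z$ \emph{onto} the open unit ball of $Y$. Both facts come straight from the definition: $\widehat{T}:Z/\ker T\to Y$ is an isometric isomorphism, and $\|z+\ker T\|=\inf_{k\in\ker T}\|z+k\|$.

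For $\|S\|\le\lambda_0$, take any $\lambda>0$ with $S[B_X]\subset T[\lambda\mathring{B}_Z]$. For $x\in B_X$ pick $z$ with $\|z\|<\lambda$ and $Tz=Sx$. Then
\[
\|Sx\|=\|\widehat{T}(z+\ker T)\|=\|z+\ker T\|\le\|z\|<\lambda .
\]
Taking the supremum over $x\in B_X$ gives $\|S\|\le\lambda$, and then taking the infimum over admissible $\lambda$ gives $\|S\|\le\lambda_0$.

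For $\lambda_0\le\|S\|$, fix any $\lambda>\|S\|$ and show that $\lambda$ is admissible. Take $x\in B_X$ and set $y=Sx$, so $\|y\|\le\|S\|<\lambda$.
\begin{itemize}
\item Since $\widehat{T}$ is an isometric isomorphism, there is a class $z_0+\ker T$ with $\widehat{T}(z_0+\ker T)=y$ and $\|z_0+\ker T\|=\|y\|<\lambda$.
\item By the definition of the quotient norm as an infimum, there is $k\in\ker T$ with $\|z_0+k\|<\lambda$.
\item Set $z=z_0+k$. Then $z\in\lambda\mathring{B}_Z$ and $Tz=y$.
\end{itemize}
Hence $S[B_X]\subset T[\lambda\mathring{B}_Z]$ for every $\lambda>\|S\|$, which yields $\lambda_0\le\|S\|$.

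The only delicate points are the two strict inequalities: using the open ball and a strict $\lambda>\|S\|$ is what makes the near-minimizer in the quotient norm available. One degenerate case needs a remark: if $S=0$, every $\lambda>0$ is admissible, so $\lambda_0=0=\|S\|$. This is consistent with the infimum being taken over $\lambda>0$.
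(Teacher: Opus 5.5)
Your proof is correct and follows essentially the same route as the paper. The paper factors $T=\widehat{T}\circ q$ and reduces the claim to $\inf\{\lambda>0:\widehat{T}^{-1}S[B_X]\subset \lambda\mathring{B}_{Z/\ker T}\}=\|\widehat{T}^{-1}\circ S\|=\|S\|$, using $q(\lambda\mathring{B}_Z)=\lambda\mathring{B}_{Z/\ker T}$ without proof. You verify the two inequalities elementwise, and your near-minimizer step is exactly a proof of that open-ball identity.
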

\begin{proof}
Let $q:Z\to Z/\ker T$ denote the canonical map. Since $T$ is a quotient map, the induced operator 
$\widehat{T}:Z/\ker T\to Y$ is an isometric isomorphism and $T=\widehat{T}\circ q$.

Thus,
\[
S[B_X]\subset T[\lambda \mathring{B}_Z]
\iff
S[B_X]\subset \widehat{T}[q(\lambda \mathring{B}_Z)]
\iff
\widehat{T}^{-1}\circ S[B_X]\subset q(\lambda \mathring{B}_Z).
\]

Since $q(\lambda \mathring{B}_Z)=\lambda \mathring{B}_{Z/\ker T}$, we obtain
\[
\inf\{\lambda>0:S[B_X]\subset T[\lambda \mathring{B}_Z]\}
=
\inf\{\lambda>0:\widehat{T}^{-1}\circ S[B_X]\subset \lambda \mathring{B}_{Z/\ker T}\}=\|\widehat{T}^{-1}\circ S\|.
\]
Since $\widehat{T}^{-1}$ is an isometry, we conclude that $\|\widehat{T}^{-1}\circ S\|=\|S\|$,
which completes the proof.
\end{proof}

\begin{proposition}\label{Prop:Fecho}
Let $X$ and $Y$ be Banach spaces. If $T:X\to Y$ is a surjective linear operator, then for every pair of real numbers $r<R$, we have $\overline{T[rB_X]}\subset T[R\mathring{B}_X]$.
\end{proposition}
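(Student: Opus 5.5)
The argument is a short application of the open mapping theorem combined with a scaling trick. Since $T:X\to Y$ is a bounded surjective linear operator between Banach spaces, the open mapping theorem gives $\delta>0$ with $\delta \mathring{B}_Y\subset T[\mathring{B}_X]$. By linearity this rescales to $\eta\,\delta \mathring{B}_Y\subset T[\eta\mathring{B}_X]$ for every $\eta>0$. If $r\le 0$ the claim is trivial (for $r<0$ the set $rB_X$ is empty under the usual convention, and for $r=0$ we have $\{0\}\subset T[R\mathring{B}_X]$ since $R>0$). So I assume $0<r<R$ and put $\eta=R-r>0$.

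Let $y\in\overline{T[rB_X]}$. By definition of closure, there is some $x\in rB_X$ with $\|y-Tx\|<\eta\delta$. Then $y-Tx\in \eta\delta\mathring{B}_Y\subset T[\eta\mathring{B}_X]$, so $y-Tx=Tz$ for some $z$ with $\|z\|<\eta$. Therefore $y=T(x+z)$ with
\[
\|x+z\|\le \|x\|+\|z\|<r+(R-r)=R,
\]
which gives $y\in T[R\mathring{B}_X]$.

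The only substantive input is the open mapping theorem. I take $T$ to be bounded, as it is in the intended application to quotient maps. With that, the argument is essentially routine, and the one point requiring care is using the strict gap $R-r$ to absorb the approximation error into the open ball.
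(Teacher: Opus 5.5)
Your proof is correct. It differs from the paper's mainly in how the open mapping theorem is applied.

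The paper reduces to $r=1$ and passes to the induced isomorphism $\widehat{T}:X/\ker T\to Y$. It then uses continuity of $\widehat{T}^{-1}$ (the bounded inverse theorem) to turn $Tx_n\to y$ with $x_n\in B_X$ into convergence of cosets. This gives $\|\widehat{T}^{-1}(y)\|_{X/\ker T}\le 1$, and the slack $R>1$ is spent on choosing a representative of norm $<R$, since the quotient norm need not be attained.

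You instead use the quantitative form $\delta\mathring{B}_Y\subset T[\mathring{B}_X]$ and spend the slack $R-r$ on absorbing a single approximation error $y-Tx$. Your version avoids the quotient space and sequences and is fully explicit: $\|y-Tx\|<(R-r)\delta$ suffices. The paper's version in effect proves the more structural inclusion $\overline{T[rB_X]}\subset\widehat{T}[rB_{X/\ker T}]$, which fits the quotient-map viewpoint of Proposition~\ref{NormEstimation} used in the main theorem. Both tacitly require $T$ to be bounded, as you point out.

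One correction to your degenerate-case remark. If $rB_X$ is read literally as the dilate $\{rx:x\in B_X\}$, then for $r<0$ it equals $|r|B_X\neq\emptyset$. The statement then actually fails, e.g.\ for $T$ the identity, $r=-2$, $R=1$. So the proposition should be read with $r\ge 0$, as the paper's reduction to $r=1$ tacitly does, and that is exactly the range your argument covers.
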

\begin{proof}
It suffices to prove the case $r=1$. Consider the induced isomorphism $\widehat{T}:X/\ker T\to Y$, and let $\widehat{T}^{-1}$ be its inverse. Let $y \in \overline{T[B_X]}$ be arbitrary. There is a sequence $(x_n)_n$ in $B_X$ such that 
$T(x_n)\to y$. Then $x_n+\ker T=\widehat{T}^{-1}(Tx_n)\to \widehat{T}^{-1}(y)$ in $X/\ker T$. Since $\|x_n+\ker T\|_{X/\ker T}\leq \|x_n\|\leq 1$, it follows that $\|\widehat{T}^{-1}(y)\|_{X/\ker T}\leq 1$.
We write $\widehat{T}^{-1}(y)=x+\ker T$. Since 
\[\|\widehat{T}^{-1}(y)\|_{X/\ker T}=\|x+\ker T\|_{X/\ker T}=\inf_{z\in\ker T}\|x+z\|\leq 1,\]
and $R>1$, we may fix $z\in\ker T$ such that $\|x+z\|<R$. Therefore, $x+z\in R\mathring{B}_X$ and $T(x+z)=Tx=y$. We deduce that $\overline{T[B_X]}\subseteq T\bigl(R\mathring{B}_X\bigr)$.
\end{proof}

The following lemma is a well-known result that can be found at \cite[Theorem VI 7.1]{DunfordSchwartz}. 

\begin{lemma}\label{DiagramC(K)}
Let $L$ and $K$ be compact Hausdorff spaces and let $\mathcal{M}(K)$ be endowed with the weak$^*$-topology.
For every continuous function $g:L\to \mathcal{M}(K)$, there is a unique operator 
$S:C(K)\to C(L)$ such that $S^*(\delta_t)=g(t)$ for each $t\in L$ and $\|S\|=\sup_{t\in L}\|g(t)\|$.
Moreover, if $g$ is weakly continuous, then $S$ is weakly compact, and if
$g$ is norm-continuous, then $S$ is compact.
\end{lemma}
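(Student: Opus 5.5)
I would construct $S$ directly from $g$ by integration. For $f\in C(K)$ and $t\in L$ set
\[
(Sf)(t)=\langle g(t),f\rangle=\int_K f\,dg(t).
\]
Since $g$ is weak$^*$-continuous, $t\mapsto\langle g(t),f\rangle$ is continuous on $L$, so $Sf\in C(L)$. Clearly $S$ is linear.

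\emph{Boundedness and the norm formula.} The set $g(L)$ is weak$^*$-compact, so it is pointwise bounded on $C(K)$. By the uniform boundedness principle, $\sup_{t\in L}\|g(t)\|<\infty$. Then
\[
\|S\|=\sup_{\|f\|\le1}\sup_{t\in L}|\langle g(t),f\rangle|=\sup_{t\in L}\|g(t)\|,
\]
by exchanging the two suprema and using $\|\mu\|=\sup_{\|f\|\le 1}|\langle\mu,f\rangle|$ (Riesz).

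\emph{The adjoint condition and uniqueness.} By construction $\langle S^*\delta_t,f\rangle=(Sf)(t)=\langle g(t),f\rangle$, so $S^*(\delta_t)=g(t)$. For uniqueness, note that any operator $S'$ with $S'^*\delta_t=g(t)$ for all $t$ satisfies $(S'f)(t)=\langle g(t),f\rangle=(Sf)(t)$ for every $f$ and $t$. Hence $S'=S$.

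\emph{Weak compactness when $g$ is weakly continuous.} By Gantmacher's theorem it suffices to show that $S^*$ is weakly compact. Let $A$ be the closed absolutely convex hull of $g(L)$.
\begin{enumerate}[(i)]
\item Since $g$ is weakly continuous, $g(L)$ is weakly compact. By the Krein--\v{S}mulian theorem, $A$ is weakly compact.
\item Every $\mu\in B_{\mathcal{M}(L)}$ is a weak$^*$-limit of a net in the absolutely convex hull of $\{\delta_t:t\in L\}$. This follows from the bipolar theorem, or from Krein--Milman applied to the ball, whose extreme points are $\pm\delta_t$.
\item $S^*$ is weak$^*$-to-weak$^*$ continuous and maps $\pm\delta_t$ to $\pm g(t)$. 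Therefore $S^*(B_{\mathcal{M}(L)})$ lies in the weak$^*$-closure of the absolutely convex hull of $g(L)$.
\item $A$ is weakly compact, hence weak$^*$-compact (the weak$^*$ topology is coarser), hence weak$^*$-closed. So the weak$^*$-closure in (iii) is contained in $A$.
\end{enumerate}
Thus $S^*(B_{\mathcal{M}(L)})\subset A$ is relatively weakly compact, so $S^*$, and hence $S$, is weakly compact.

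\emph{Compactness when $g$ is norm-continuous.} The same argument applies with norm compactness in place of weak compactness. Now $g(L)$ is norm compact, so by Mazur's theorem $A$ is norm compact, and in particular weak$^*$-closed. Then $S^*(B_{\mathcal{M}(L)})\subset A$ is relatively norm compact. So $S^*$ is compact, and $S$ is compact by Schauder's theorem.

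The main obstacle is step (iii)–(iv): showing that the image of the whole ball of $\mathcal{M}(L)$, not only of the Dirac measures, stays inside the weak$^*$-closed set $A$. I would handle it exactly as above, by combining weak$^*$-density of the absolutely convex hull of the Dirac measures with the weak$^*$-closedness of the compact hull $A$.
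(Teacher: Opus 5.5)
Your argument is correct and complete. The paper does not prove this lemma itself; it simply quotes it from Dunford--Schwartz (Theorem VI.7.1). Your proof is the standard argument for that result. You define $S$ by integration, get the norm formula by exchanging suprema after the uniform boundedness principle, and handle both compactness clauses by trapping $S^*(B_{\mathcal{M}(L)})$ inside the closed absolutely convex hull $A$ of $g(L)$. That last step uses weak$^*$-density of $\operatorname{aco}\{\delta_t\}$ in the ball together with the weak$^*$-closedness of $A$, which comes from Krein--\v{S}mulian or Mazur.

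Two side remarks. First, nothing in your argument uses that the domain is a $C(K)$ space: it works verbatim for any Banach space $X$ with $X^*$ in place of $\mathcal{M}(K)$. That is the generality in which the cited theorem is stated, together with converses that the paper does not need.

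Second, the compact case has a shorter, duality-free proof. For $\|f\|\le 1$ you have $|(Sf)(t)-(Sf)(s)|\le\|g(t)-g(s)\|$. So norm continuity of $g$ makes $S(B_{C(K)})$ a bounded, equicontinuous subset of $C(L)$, and Arzel\`a--Ascoli gives compactness of $S$ directly, without Mazur or Schauder. Your route has the advantage of treating the weakly compact and compact cases uniformly.
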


\begin{remark}
Conversely to Lemma \ref{DiagramC(K)}, given a bounded linear operator $S:C(K)\to C(L)$, the formula $t\mapsto S^*(\delta_t)$ defines 
a weak$^*$-continuous function $g:L\to \mathcal{M}(K)$ with $\|S\|=\sup_{t\in L}\|g(t)\|$.
\end{remark}

We are now in a position to prove our main result.

\begin{proof}[\textbf{Proof of Theorem \ref{Main1}}]
Let $S^*:\Lip(N)^*\to \Lip(M)^*$ be the dual operator and let $\epsilon>0$. 
Choose numbers $r,R$ such that $\|S\|<r<R<\|S\|+\epsilon$.

Since $\varPhi_M:\Lip(M)\to C(\beta \widetilde{M})$
is a linear isometric embedding, $\varPhi_M^*:\mathcal{M}(\beta \widetilde{M})\to \Lip(M)^*$ is a quotient map, and according to Proposition \ref{NormEstimation} we have 
\begin{equation}\label{eq:aux}
S^*[B_{\Lip(N)^*}]\subset \varPhi_M^*[r\mathring{B}_{\mathcal{M}(\beta \widetilde{M})}].
\end{equation}

We first prove the general case. We define a map $G:\widetilde{N}\to 2^{\mathcal{M}(\beta \widetilde{M})}$ by 
\[G(p,q)=\left\{\mu\in R B_{\mathcal{M}(\beta \widetilde{M})}: \varPhi_M^*(\mu)=S^{*}(\overline{m}_{pq})\right\}.\]

We observe that for every $(p,q)\in \widetilde{N}$, $G(p,q)$ is a nonempty closed and convex subset of $\mathcal{M}(\beta \widetilde{M})$. 
Indeed, the convexity is clear. To see that $G(p,q)$ is closed, note that $\varPhi_M^*$ is continuous, hence $G(p,q)$ is the inverse image of a singleton intersected with a closed ball. To prove nonemptiness, fix $(p,q)\in\widetilde{N}$. From \eqref{eq:aux} there exists $\mu_0\in r\mathring{B}_{\mathcal{M}(\beta \widetilde{M})}$ such that 
$\varPhi_M^*(\mu_0)=S^*(\overline{m}_{pq})$. In particular, $\mu_0\in G(p,q)$.

To see that $G$ is lower semi-continuous, let $U\subset \mathcal{M}(\beta \widetilde{M})$
be an arbitrary open subset. We prove that 
\[
G^{-1}[U]= \{(p,q)\in \widetilde{N}: G(p,q)\cap U\neq \emptyset\}
\]
is an open subset of $\widetilde{N}$. 

We start by noticing that, for all $(x,y),(p,q)\in\widetilde{N}$ and $f\in B_{\Lip(M)}$, the following relation holds true:
\begin{align*}
&|\langle S^*(\overline{m}_{xy})-S^*(\overline{m}_{pq}),f \rangle|= \left|\frac{S(f)(x)-S(f)(y)}{d(x,y)}-\frac{S(f)(p)-S(f)(q)}{d(p,q)}\right|\\
&=\frac{1}{d(x,y)d(p,q)}|(d(p,q)S(f)(x)-d(x,y)S(f)(p))+(d(x,y)S(f)(q)-d(p,q)S(f)(y))|\\
&\leq \frac{1}{d(x,y)d(p,q)}|d(p,q)(S(f)(x)-S(f)(p)) + S(f)(p)(d(p,q)-d(x,y))|\\
&+\frac{1}{d(x,y)d(p,q)}|d(p,q)(S(f)(q)-S(f)(y))+S(f)(q)(d(x,y)-d(p,q))|\\
&\leq \frac{\|S\|}{d(x,y)}(d(x,p)+d(y,q))+\frac{\|S\|}{d(x,y)d(p,q)}(d(p,0)+d(q,0))|d(p,q)-d(x,y)|.
\end{align*}

Since $f\in B_{\Lip(M)}$ is arbitrary, the previous relation implies that the function $h:\widetilde{N}\to \Lip(M)^*$ given by $h(p,q)=S^*(\overline{m}_{pq})$ is norm-continuous. 

Next, we claim that for every $(p,q)\in \widetilde{N}$ we have $G(p,q)\cap U\neq \emptyset$ if and only if $h(p,q)\in \varPhi_M^*[U\cap R\mathring{B}_{\mathcal{M}(\beta \widetilde{M})}]$. One implication is evident. For the reverse implication, given $(p_0,q_0)\in \widetilde{N}$, assume that $\mu\in G(p_0,q_0)\cap U$. If $\|\mu\|<R$, then $\mu\in U\cap R\mathring{B}_{\mathcal{M}(\beta \widetilde{M})}$. If $\|\mu\|= R$, by \eqref{eq:aux} we may choose $\mu_1\in r\mathring{B}_{\mathcal{M}(\beta \widetilde{M})}$ such that 
$\varPhi_M^*(\mu_1)=S^*(\overline{m}_{p_0q_0})$. For $\lambda\in(0,1)$ sufficiently close to $1$, define $\mu_\lambda=\lambda\mu+(1-\lambda)\mu_1$. Then $\mu_\lambda\in U\cap R\mathring{B}_{\mathcal{M}(\beta \widetilde{M})}$ and $\varPhi_M^*(\mu_\lambda)=S^*(\overline{m}_{p_0q_0})$. Hence, in any case there exists $\widetilde{\mu}\in U\cap R\mathring{B}_{\mathcal{M}(\beta \widetilde{M})}$ such that $h(p_0,q_0)=S^*(\overline{m}_{p_0q_0})=\varPhi_M^*(\widetilde{\mu})$ and this establishes our claim.

Now since $\varPhi_M^*$ is a quotient map, it is an open map; therefore,
$\varPhi_M^*[U\cap R\mathring{B}_{\mathcal{M}(\beta \widetilde{M})}]$ is an open subset of $\Lip(M)^*$. Hence
\[
G^{-1}[U]=h^{-1}[\varPhi_M^*[U\cap R\mathring{B}_{\mathcal{M}(\beta \widetilde{M})}]]
\]
is open.

By applying Michael's Selection Theorem \cite[Theorem 1.16]{BenLind}, the function 
$G$ admits a continuous selection $g:\widetilde{N}\to \mathcal{M}(\beta \widetilde{M})$. By considering 
$\mathcal{M}(\beta \widetilde{M})$ endowed with the weak$^*$-topology, and since $g[\widetilde{N}]$ is a norm-bounded subset, the function $g$ admits a unique extension $\overline{g}:\beta \widetilde{N}\to (\mathcal{M}(\beta \widetilde{M}),w^*)$ which, by Lemma \ref{DiagramC(K)}, induces an operator $\mathfrak{S}:C(\beta \widetilde{M})\to C(\beta \widetilde{N})$ satisfying 
\[
\|\mathfrak{S}\|=\sup\{\|\overline{g}(\xi)\|:\xi\in \beta\widetilde{N}\}\leq R<\|S\|+\epsilon
\]
and $\mathfrak{S}^*(\delta_{\xi})=\overline{g}(\xi)$ for every $\xi \in  \beta \widetilde{N}$.

Next we assume that $S$ is a compact operator. By Schauder's theorem \cite[Theorem 15.3]{Fabian}, $S^*$ is a compact operator. Since $\{\overline m_{pq}:(p,q)\in\widetilde N\} \subset B_{\Lip(N)^*}$, the set $K=\overline{\{S^*(\overline m_{pq}):(p,q)\in\widetilde N\}}^{\|\cdot\|}$
is norm-compact.

We define a map $F:K\to 2^{\mathcal{M}(\beta \widetilde{M})}$ by \[F(y)=\left\{\mu\in R B_{\mathcal M(\beta\widetilde M)}:\varPhi_M^*(\mu)=y\right\}.\] 

As in the general case, for every $y\in K$, $F(y)$ is a closed and convex subset of $\mathcal{M}(\beta \widetilde{M})$. Since $\varPhi_M^*$ is surjective, from \eqref{eq:aux} and Proposition \ref{Prop:Fecho} we obtain 
\begin{equation}\label{eq:aux2}
K\subset \overline{\varPhi_M^*\bigl[r\mathring{B}_{\mathcal{M}(\beta\widetilde M)}\bigr]}
\subset
\varPhi_M^*\bigl[R\mathring{B}_{\mathcal{M}(\beta\widetilde M)}\bigr]
\subset
\varPhi_M^*\bigl[RB_{\mathcal{M}(\beta\widetilde M)}\bigr].
\end{equation}
Hence $F(y)$ is also nonempty.

To establish that $F$ is lower semi-continuous, let $U\subset \mathcal{M}(\beta \widetilde{M})$
be an arbitrary open subset. We prove that 
\[
F^{-1}[U]=\{y\in K: F(y)\cap U\neq \emptyset\}=\varPhi_M^*[U\cap R\mathring{B}_{\mathcal{M}(\beta \widetilde{M})}]\cap K,
\]
which is therefore an open subset of $K$.

One inclusion is evident. For the reverse inclusion, let $y\in K$ be arbitrary and assume that $\mu\in F(y)\cap U$. If $\|\mu\|<R$, then $\mu\in U\cap R\mathring{B}_{\mathcal{M}(\beta \widetilde{M})}$, and we are done. If $\|\mu\|= R$, from \eqref{eq:aux2} we may choose $\mu_1\in R \mathring{B}_{\mathcal{M}(\beta \widetilde{M})}$ such that 
$\varPhi_M^*(\mu_1)=y$. Similarly as in the general case, we pick $\lambda\in(0,1)$ sufficiently close to $1$ and define $\mu_\lambda=\lambda\mu+(1-\lambda)\mu_1$. Then $\mu_\lambda\in U\cap R\mathring{B}_{\mathcal{M}(\beta \widetilde{M})}$ and $\varPhi_M^*(\mu_\lambda)=y$. Hence, in any case there exists $\widetilde{\mu}\in U\cap R\mathring{B}_{\mathcal{M}(\beta \widetilde{M})}$ such that $\varPhi_M^*(\widetilde{\mu})=y$. Therefore, $y\in \varPhi_M^*[U\cap R\mathring{B}_{\mathcal{M}(\beta \widetilde{M})}]\cap K$ and this establishes the reverse inclusion.

Once again, by applying Michael's Selection Theorem \cite[Theorem 1.16]{BenLind} (see also \cite{Michael1956}), we obtain a norm-continuous selection
$\sigma:K\longrightarrow\mathcal M(\beta\widetilde M)$.
The map
$h:\widetilde N\longrightarrow K$, given by
$h(p,q)=S^*(\overline m_{pq})$, is continuous, as we saw above, and therefore extends continuously to
$\overline h:\beta\widetilde N\longrightarrow K$.
Thus
\[
\overline g=\sigma\circ\overline h:
\beta\widetilde N\longrightarrow\mathcal M(\beta\widetilde M)
\]
is norm-continuous. As in the general case, the corresponding operator $\mathfrak{S}:C(\beta \widetilde{M})\to C(\beta \widetilde{N})$ satisfies 
\[
\|\mathfrak{S}\|=\sup\{\|\overline{g}(\xi)\|:\xi\in \beta\widetilde{N}\}\leq R<\|S\|+\epsilon
\]
and $\mathfrak{S}^*(\delta_{\xi})=\overline{g}(\xi)$ for every $\xi \in  \beta \widetilde{N}$. Since $\overline g$ is norm-continuous, by Lemma \ref{DiagramC(K)} $\mathfrak S$ is a compact operator.

Finally, to complete the proof, we note that in any of the cases above, for every $f\in \Lip(M)$ and $(p,q)\in \widetilde{N}$ we have
\begin{align*}
\mathfrak{S}(\varPhi_M(f))(p,q)&=\mathfrak{S}^*(\delta_{(p,q)})(\varPhi_M(f))=\int \varPhi_M(f) d g(p,q)\\
&=\langle \varPhi_M^*(g(p,q)),f \rangle=\langle S^*(\overline{m}_{pq}),f\rangle= \frac{S(f)(p)-S(f)(q)}{d(p,q)}=\varPhi_N(S(f))(p,q).
\end{align*}

\end{proof}

Theorem~\ref{Main1} and Lemma~\ref{DiagramC(K)} immediately yield the following representation.

\begin{corollary}
Let $M$ and $N$ be pointed metric spaces, let $S:\Lip(M)\to\Lip(N)$ be a bounded linear operator, and let $\epsilon>0$.
Then there exist a bounded linear operator $\mathfrak S:C(\beta\widetilde M)\to C(\beta\widetilde N)$
and a weak$^*$ continuous map $g:\beta\widetilde N\to\mathcal M(\beta\widetilde M)$ such that $\|\mathfrak S\|\le \|S\|+\epsilon$,
$\|g(\xi)\|\le \|S\|+\epsilon$ for every $\xi\in\beta\widetilde N$, and
\[
\mathfrak S(F)(\xi)
=
\int_{\beta\widetilde M} F\,dg(\xi)
\]
for every $F\in C(\beta\widetilde M)$ and every
$\xi\in\beta\widetilde N$. Moreover, $\mathfrak S(\Phi_M(f))=\Phi_N(S(f))$ for every $f\in\Lip(M)$. In particular, for every
$(p,q)\in\widetilde N$ and every $f\in\Lip(M)$,
\[
\frac{S(f)(p)-S(f)(q)}{d(p,q)}
=
\int_{\beta\widetilde M}\Phi_M(f)\,dg(p,q).
\]

If $S$ is a compact operator, then $\mathfrak S$ can be chosen compact and $g$ can be chosen norm-continuous.
\end{corollary}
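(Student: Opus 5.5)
The corollary follows by reading off the operator $\mathfrak S$ produced in the proof of Theorem~\ref{Main1} through the correspondence of Lemma~\ref{DiagramC(K)} and the subsequent Remark. First, apply Theorem~\ref{Main1} to $S$ and $\epsilon$. This gives $\mathfrak S:C(\beta\widetilde M)\to C(\beta\widetilde N)$ with $\|\mathfrak S\|\le\|S\|+\epsilon$ and $\mathfrak S\circ\varPhi_M=\varPhi_N\circ S$. Next, define $g(\xi)=\mathfrak S^*(\delta_\xi)$ for $\xi\in\beta\widetilde N$. By the Remark following Lemma~\ref{DiagramC(K)}, $g$ is weak$^*$-continuous and $\sup_\xi\|g(\xi)\|=\|\mathfrak S\|\le\|S\|+\epsilon$, so the pointwise bound holds. (In the proof of the theorem this $g$ is exactly $\overline g$, which already satisfies $\|\overline g(\xi)\|\le R$.)

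The integral formula is just the Riesz identification. For $F\in C(\beta\widetilde M)$,
\[
\mathfrak S(F)(\xi)=\langle\delta_\xi,\mathfrak S F\rangle=\langle\mathfrak S^*\delta_\xi,F\rangle=\int_{\beta\widetilde M}F\,dg(\xi).
\]
Take $F=\varPhi_M(f)$ and $\xi=(p,q)\in\widetilde N\subset\beta\widetilde N$. By the lifting identity the left side equals $\varPhi_N(S f)(p,q)$. By the definition of the de Leeuw map on points of $\widetilde N$, this is $\frac{S(f)(p)-S(f)(q)}{d(p,q)}$, which gives the displayed formula.

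For the compact case, use the compact lifting from Theorem~\ref{Main1}. The norm-continuity of $g$ is the only point that needs care. I would not derive it abstractly from compactness of $\mathfrak S$. Instead I would take $g=\overline g=\sigma\circ\overline h$ as constructed in the proof, which is norm-continuous by construction, and note that $\mathfrak S^*\delta_\xi=\overline g(\xi)$, so the two descriptions coincide. Alternatively, compactness of $\mathfrak S$ makes $\mathfrak S^*$ weak$^*$-to-norm continuous on bounded sets, and $\xi\mapsto\delta_\xi$ is weak$^*$-continuous into the unit ball, so $g$ is norm-continuous either way. Nothing here is a genuine obstacle; the corollary is bookkeeping on top of Theorem~\ref{Main1} and Lemma~\ref{DiagramC(K)}.
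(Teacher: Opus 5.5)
Your proposal is correct and follows the paper's argument. As the paper implicitly does, you read off $g(\xi)=\mathfrak S^*(\delta_\xi)$, which coincides with the map $\overline g$ built in the proof of Theorem~\ref{Main1}, and then combine the Riesz identification with $\mathfrak S\circ\varPhi_M=\varPhi_N\circ S$. Your alternative argument for norm-continuity in the compact case is also valid: the adjoint of a compact operator is weak$^*$-to-norm continuous on bounded sets, so any compact lifting, not just the one constructed in the proof, gives a norm-continuous $g$.
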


%----------------------------------------------------------------------------------------------------------------------------
%----------------------------------------------------------------------------------------------------------------------------
%----------------------------------------------------------------------------------------------------------------------------
%----------------------------------------------------------------------------------------------------------------------------
%----------------------------------------------------------------------------------------------------------------------------
%----------------------------------------------------------------------------------------------------------------------------
%----------------------------------------------------------------------------------------------------------------------------
%----------------------------------------------------------------------------------------------------------------------------
%----------------------------------------------------------------------------------------------------------------------------

\section{Further Remarks}
\label{SecIso}

Theorem \ref{Main1} offers a different perspective on operators between Lipschitz spaces.  The image of the de Leeuw map can be understood as a subspace of the functions $u$ in $C(\beta \widetilde{M})$ satisfying the condition
\[
d(x,z)u(x,z)=d(x,y)u(x,y)+d(y,z)u(y,z)
\]
for all $x,y,z \in M$, under the convention that $u(x,x)=0$.

Motivated by the terminology from ergodic theory and groupoid theory (see, for example, \cite[Definition 3.1]{FHKP}), we shall refer to the identity above as the \emph{cocycle identity}. Note that, under this convention, we have $u(x,y)=-u(y,x)$ for every $(x,y)\in \widetilde M$.

For any metric space $M$, we consider
\[
\mathcal{Z}(M)
=
\{u\in C(\beta \widetilde M):u \text{ satisfies the cocycle identity}\}.
\]
This is clearly a subspace of $C(\beta \widetilde M)$, which we call the cocycle space of $M$.

\begin{proposition}
For every metric space $(M,d)$, the de Leeuw map is an isometric isomorphism from $\Lip(M)$ onto $\mathcal{Z}(M)$.
\end{proposition}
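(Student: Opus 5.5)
We already know that $\varPhi_M$ is a linear isometric embedding of $\Lip(M)$ into $C(\beta\widetilde M)$. So the plan has two parts: show that its image lies in $\mathcal{Z}(M)$, and show that every $u\in\mathcal{Z}(M)$ lies in its image. Isometry is then inherited automatically, so only the range needs to be identified.

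\emph{The image lies in $\mathcal{Z}(M)$.} Take $f\in\Lip(M)$ and $u=\varPhi_M(f)$. For $(x,y)\in\widetilde M$ we have $d(x,y)u(x,y)=f(x)-f(y)$. This formula also holds when $x=y$, because of the convention $u(x,x)=0$. The cocycle identity is then just the telescoping relation
\[
f(x)-f(z)=\bigl(f(x)-f(y)\bigr)+\bigl(f(y)-f(z)\bigr).
\]
We should check that the convention is used consistently. The identity only involves the values of $u$ on $\widetilde M$ together with the value $0$ on the diagonal, so it makes sense for arbitrary $u\in C(\beta\widetilde M)$.

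\emph{Surjectivity onto $\mathcal{Z}(M)$.} Take $u\in\mathcal{Z}(M)$ and define $f(x)=d(x,0)u(x,0)$ for $x\neq 0$, and $f(0)=0$. Equivalently, $f(x)=d(x,0)u(x,0)$ for all $x$, using the convention. Applying the cocycle identity to the triple $(x,y,0)$ gives
\[
d(x,0)u(x,0)=d(x,y)u(x,y)+d(y,0)u(y,0),
\]
that is, $f(x)-f(y)=d(x,y)u(x,y)$ for every $(x,y)\in\widetilde M$. Since $u$ is bounded, this yields
\[
|f(x)-f(y)|\le\|u\|_\infty\, d(x,y),
\]
so $f\in\Lip(M)$. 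Moreover, $\varPhi_M(f)$ and $u$ agree on $\widetilde M$.

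\emph{Equality on all of $\beta\widetilde M$.} Both $\varPhi_M(f)$ and $u$ are continuous on $\beta\widetilde M$. The space $\widetilde M$ is dense in $\beta\widetilde M$, and $\beta\widetilde M$ is Hausdorff. Hence two continuous functions that agree on $\widetilde M$ agree everywhere, so $u=\varPhi_M(f)$.

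The only delicate point is interpretive: the cocycle identity constrains $u$ only on $\widetilde M$ (with the diagonal convention), not on the remainder $\beta\widetilde M\setminus\widetilde M$. The density argument above is exactly what handles this, and no extra obstacle arises. Finally, $\mathcal{Z}(M)$ is a closed subspace, since it is defined by pointwise linear conditions on $\widetilde M$. This is consistent with its being the isometric image of a Banach space.
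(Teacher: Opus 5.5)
Your proposal is correct and follows the paper's proof essentially verbatim: the forward inclusion is the telescoping identity, and surjectivity comes from setting $f(x)=d(x,0)u(x,0)$ and applying the cocycle identity to the triple $(x,y,0)$. Your density argument, which upgrades the equality $u=\varPhi_M(f)$ from $\widetilde M$ to all of $\beta\widetilde M$, spells out a step the paper treats as clear.
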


\begin{proof}
If $f\in \Lip(M)$, let $u_f=\Phi_M(f)$. It is straightforward to check that $u_f$ satisfies the cocycle identity. Conversely, let $u\in \mathcal{Z}(M)$ and let $o\in M$ be the distinguished point. Define $f:M\to \mathbb R$ by the formula $f(x)=d(x,o)u(x,o)$. Clearly $f(o)=0$. Moreover,
\[
f(x)-f(y)
=
d(x,o)u(x,o)-d(y,o)u(y,o)
=
d(x,y)u(x,y),
\]
where the last equality follows from the cocycle identity. Therefore,
\[
|f(x)-f(y)|
=
d(x,y)|u(x,y)|
\leq
\|u\|_\infty d(x,y)
\]
for every $x,y\in M$, and hence $f\in \Lip(M)$. It is clear that $\Phi_M(f)(x,y)=u(x,y)$ for every $(x,y)\in \widetilde{M}$. Thus $\Phi_M(\Lip(M))=\mathcal{Z}(M)$. Since the de Leeuw map is an isometry, the proof is complete.
\end{proof}

Theorem 1 may therefore be viewed as an extension theorem for operators acting on cocycle spaces. Conversely, any operator $T:C(\beta \widetilde M)\to C(\beta \widetilde N)$ satisfying $T(\mathcal{Z}(M))\subset \mathcal{Z}(N)$ induces, in a canonical way, an operator from $\Lip(M)$ into $\Lip(N)$.

Next, with regard to Theorem \ref{Main1}, the following question naturally arises.

\begin{problem}
Is it possible for a linear isomorphism from $\Lip(M)$ onto $\Lip(N)$ to be lifted to an isomorphism from $C(\beta\widetilde{M})$ onto $C(\beta\widetilde{N})$ in such a way that the above diagram commutes?
\end{problem}

If $(M,d_M)$ and $(N,d_N)$ are pointed metric spaces, $\gamma:N\to M$ is a bi-Lipschitz isomorphism such that $\gamma(0_N)=0_M$, and $r\neq 0$ is a real scalar, then the pair $(\gamma,r)$ induces a linear isomorphism $S:\Lip(M)\to \Lip(N)$ through the formula $S(f)(y)=r\cdot(f\circ \gamma)(y)$. This can be lifted to an isomorphism $\mathfrak{S}:C(\beta \widetilde{M})\to C(\beta \widetilde{N})$ given as follows: 
for each $F\in C(\beta \widetilde{M})$, $\mathfrak{S}(F)$ is the unique continuous extension to $\beta \widetilde{N}$ of the map 
\[\widetilde{N}\ni (x,y)\mapsto \frac{r\cdot d_M(\gamma(x),\gamma(y))}{d_N(x,y)}F(\gamma(x),\gamma(y))\in \R.\]

It is readily seen that $\mathfrak{S}(\varPhi_M(f))=\varPhi_N(S(f))$ for every $f\in \Lip(M)$. In particular, if $M$ and $N$ are uniformly concave \cite[Definition 3.33]{Weaver} and $S:\Lip(M)\to \Lip(N)$ is a linear isometric isomorphism, then $S$ is an operator of the above type; see \cite[Theorem 3.56]{Weaver}, and therefore admits an isometric lifting. The next example shows, however, that such isomorphic liftings cannot be obtained in general.

\begin{example}\label{ex01}Let $M=\N^\N$ and $N=\{0,1\}^\N$ both endowed with the metric
\begin{align*}d((x_n)_n,(y_n)_n)=\left\{
\begin{array}{rl}
0 & \text{if }(x_n)_n=(y_n)_n,\\
2^{-\min\{n \in \N:x_n\neq y_n\}} & \text{if }(x_n)_n\neq (y_n)_n.
\end{array} \right.
\end{align*}

with origin $0=(0,0,\ldots)$. Since $M$ and $N$ are separable ultrametric spaces, by the main result of \cite{CuthDoucha1}, the Banach spaces $\F(M)$ and $\F(N)$ are both linearly isomorphic to $\ell_1$. Hence, their respective dual spaces $\Lip(M)$ and $\Lip(N)$ are linearly isomorphic. However, since $M$ is not locally compact, $C(\beta \widetilde{M})$ admits a complemented copy of $\left(\bigoplus_{n \in \N} \ell_\infty\right)_{c_0}$; see \cite[Theorem 2]{Khmyleva}.
On the other hand, for the space $\widetilde{N}$ we may consider the sequence $(K_n)_n$, where $K_n=\{(a,b)\in N\times N : d(a,b) \geq 1/n\}.$
For each $n \in \N$ we have $K_n\subset \mathring{K}_{n+1}$, and since $N$ is compact, $K_n$ is a compact subset of $\widetilde{N}$.
It follows that $C(\beta \widetilde{N})$ has no complemented subspace isomorphic to $\left(\bigoplus_{n \in \N} \ell_\infty\right)_{c_0}$; see \cite[Theorem 1]{Khmyleva}. Consequently, the spaces $C(\beta \widetilde{M})$ and $C(\beta \widetilde{N})$ are not linearly isomorphic, whence no isomorphism $S:\Lip(M)\to \Lip(N)$ can be lifted to a linear isomorphism $\mathfrak{S}:C(\beta \widetilde{M})\to C(\beta \widetilde{N})$.
\end{example}

To conclude this section, we would like to leave open the following problem.

\begin{problem}
Let $M$ and $N$ be pointed metric spaces. Does every weakly compact operator
$S:\Lip(M)\to\Lip(N)$ admit a weakly compact lifting $\mathfrak S:C(\beta\widetilde M)\to C(\beta\widetilde N)$?
\end{problem}

One might try to adapt the proof of Theorem \ref{Main1} to the weakly compact setting. By Gantmacher's theorem \cite[Theorem 13.34]{Fabian}, the set
$K=\overline{\{S^*(\overline m_{pq}):(p,q)\in\widetilde N\}}^{\,w}$ is weakly compact and hence, when endowed with the relative weak topology, it is a compact Hausdorff space and therefore paracompact. Thus, in order to apply Michael's Selection Theorem, it would be enough to prove that, for every norm-open subset $U$ of $\mathcal M(\beta\widetilde M)$, the set
\[
\Phi_M^*\bigl[
U\cap R\mathring B_{\mathcal M(\beta\widetilde M)}
\bigr]\cap K
\]
is relatively weakly open in $K$. We do not know whether this property holds in general no even if $U$ is weakly open. Nevertheless, if the answer were affirmative, then the same argument used in the proof of Theorem \ref{Main1}, together with Proposition \ref{DiagramC(K)}, would yield a lifting theorem for weakly compact operators.

\section{Acknowledgements}

This research was supported by the Fundação de Amparo à Pesquisa do Estado de São Paulo (FAPESP), grant no.~2023/12916-1.

\end{document}